\newcommand{\filename}{CM-Line-Bundles-22-July-2025.tex} 
\providecommand{\binom}[2]{{#1\choose#2}}
\renewcommand{\geq}{\geqslant}
\renewcommand{\leq}{\leqslant}
\renewcommand{\H}{\mathrm{H}}                          
\newcommand{\K}{\mathrm{K}}      
\newcommand{\Ish}{\mathcal{I}}
\newcommand{\kk}{\mathbf{k}}
\newcommand{\spec}{\operatorname{Spec}}
\renewcommand{\AA}{\mathbb{A}} 
\newcommand{\QQ}{\mathbb{Q}} 
\newcommand{\RR}{\mathbb{R}} 
\newcommand{\ZZ}{\mathbb{Z}} 
\newcommand{\GG}{\mathbb{G}}
\newtheorem{theorem}{Theorem}[section]
\newtheorem{proposition}[theorem]{Proposition}
\theoremstyle{definition}
\newtheorem{example}[theorem]{Example}
\numberwithin{equation}{section}
\title[Slope $\K$-semistability for big and nef line bundles]{CM-line bundles and slope $\K$-semistability for big and nef line bundles along subschemes}
\author{Nathan Grieve}
\address{
School of Mathematics and Statistics, Carleton University, 4302 Herzberg Laboratories,  1125 Colonel By Drive, Ottawa, ON, K1S 5B6, Canada; 
D\'{e}partement de math\'{e}matiques, Universit\'{e} du Qu\'{e}bec \`a Montr\'{e}al, Local PK-5151, 201 Avenue du Pr\'{e}sident-Kennedy, Montr\'{e}al, QC, H2X 3Y7, Canada; Department of Pure Mathematics, University of Waterloo, 200 University Avenue West, Waterloo, ON, N2L 3G1, Canada
}
\email{nathan.m.grieve@gmail.com}%
\begin{document}

\begin{abstract}
We apply the theory of the Chow-Mumford line bundle as developed by Arezzo-et-al and build on earlier key insights of Paul and Tian (see \cite{Arezzo:DellaVedova:LaNave} and the references therein).  

In particular, we give an explicit intersection theoretic description of the Donaldson-Futaki $\K$-stability invariants that arise via deformation to the normal cones along subschemes and with respect to big and nef line bundles on projective varieties.  

In doing so we generalize to the case of big and nef line bundles the slope stability theory of Ross and Thomas \cite[Section 4]{Ross:Thomas:2006}.  A key point to what we do here is the continuity property of the Chow Mumford line bundles with respect to the ample cones of projective varieties.
\end{abstract}

\thanks{
\emph{Mathematics Subject Classification (2020):}  14L24, 14J10. \\
\emph{Key Words: $\K$-stability; Chow-Mumford line bundles; slope stability; big and nef line bundles}  \\
The author thanks the Natural Sciences and Engineering Research Council of Canada for their support through his grants DGECR-2021-00218 and RGPIN-2021-03821. 
\\
Date: \today.  \\
File name: \filename
}

\maketitle

\section{Introduction}\label{intro}

In recent times, there has been a continued interest in the study of \emph{birational divisors} and their relation to concepts in $\K$-stability.  In the context of the Zariski Riemann spaces associated to Fano varieties, a recent work is \cite{Trusiani:2024}.  A complementary circle of ideas is the non-{A}rchimedean approach to $\K$-stability that has been developed by Boucksom and Jonsson \cite{Boucksom:Jonsson:2023}, \cite{Boucksom:Jonsson:2023a}.

Turning to similar questions for birational $\K$-stability invariants for the Zariski Riemann spaces in general, and with respect to arbitrary and \emph{sufficiently positive} birational line bundles, a first step is to understand what can be done for the case of big and nef line bundles on projective varieties and, in particular, as an extension to the slope stability theory that has been developed by Ross and Thomas \cite{Ross:Thomas:2006}, \cite{Ross:Thomas:2007}.  

Such is the purpose for what we do here.  Especially, we apply the theory of Chow-Mumford (CM) line bundles to introduce a concept of \emph{slope stability along subschemes $Z \subsetneq X$ and with respect to big and nef line bundles} on a projective variety $X$. 

 In more detail, we build on the viewpoint of Arezzo-et-al which, in turn, builds on earlier key insights of Paul and Tian (see \cite{Arezzo:DellaVedova:LaNave} and the references therein).  
 
Working over an algebraically closed characteristic zero base field, we prove the following result.

\begin{theorem}\label{Ross:Thomas:Slope:Stab:DF:intro}
Suppose that $L$ is a big and nef line bundle on an $n$-dimensional normal projective variety $X$.  Suppose that $Z \subsetneq X$ is a proper subscheme and with ideal sheaf $\Ish_Z$.  Denote by $\pi \colon X' \rightarrow X$ the normalized blowing-up morphism of $X$ along $Z$ and having exceptional divisor $E$.  Let $\epsilon(L;Z)$ denote the nef threshold of $X$ with respect to $Z$.  For each $0 \leq t \leq \epsilon(L;Z)$ set
\begin{equation}\label{alpha0}
\alpha_0(L;Z; t) := \frac{((\pi^*L - t E)^n )}{n!}
\end{equation}
\begin{equation}\label{alpha1}
\alpha_1(L;Z; t) := - \frac{(\K_{X'} \cdot(\pi^*L - t E)^{n-1})}{2(n-1)!}
\end{equation}
and
\begin{equation}\label{slope}
\mu(X;L) := \frac{\alpha_1}{\alpha_0} = \frac{\alpha_1(0)}{\alpha_0(0)} \text{.}
\end{equation}
For each 
$c \in (0, \epsilon(L;Z)] \bigcap \QQ$
set
$$
\mu_c(L;\Ish_Z) := \frac{ \int_0^c \left( \alpha_1(L;Z;t) + \frac{\alpha_0'(L;Z;t)}{2} \right) \mathrm{d}t }{ \int_0^c \alpha_0(L;Z;t) \mathrm{d}t } \text{.}
$$
Fix a sufficiently divisible integer $m \in \ZZ$ so that $mc \in \ZZ$.  Then, with this notation and hypothesis the big and nef deformation to the normal cone test configuration $(\mathcal{X}, \mathcal{L}_c^{\otimes m})$ has Donaldson-Futaki invariant $\operatorname{DF}(\mathcal{X},\mathcal{L}_c^{\otimes m})$ equal to some positive scalar multiple of 
$$\mu(X;L) - \mu_c(L;\Ish_Z) \text{.}$$
\end{theorem}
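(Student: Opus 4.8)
The plan is to treat first the case in which $L$ is ample, where the Chow--Mumford formalism of Arezzo-et-al applies directly, and then to deduce the big and nef case by continuity. Over $\PP^1$ I form the compactified deformation to the normal cone: let $\mathcal{X}$ be the normalization of the blowing-up of $X \times \PP^1$ along $Z \times \{0\}$, with structure morphism $\rho \colon \mathcal{X} \to \PP^1$, projection $p \colon \mathcal{X} \to X$, exceptional divisor $\mathcal{E}$, and the natural $\GG_m$-action covering the standard action on $\PP^1$. For rational $c$ and suitably divisible $m$ the polarization is $\mathcal{L}_c^{\otimes m} = m(p^* L - c\,\mathcal{E})$, twisted by a pullback from $\PP^1$ so as to be relatively nef; its restriction to a general fibre is $mL$, while its restriction to the central fibre records the deformation to the normal cone of $Z$. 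The first step is to express $\operatorname{DF}(\mathcal{X}, \mathcal{L}_c^{\otimes m})$ as the degree of the associated CM line bundle over $\PP^1$, and hence, via the intersection-theoretic description, as a fixed positive multiple of
\[
\frac{2\,\mu(X;L)}{n+1}\,(\overline{\mathcal{L}}_c^{\,n+1}) + \big(\K_{\mathcal{X}/\PP^1} \cdot \overline{\mathcal{L}}_c^{\,n}\big),
\]
where $\overline{\mathcal{L}}_c$ denotes the compactified polarization and $\K_{\mathcal{X}/\PP^1}$ the relative canonical class.

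The second step is the intersection computation, which I would organize as a fibre integration in the parameter $t$. Because $\mathcal{E}$ dominates $E$ and the restriction of $p^* L - t\,\mathcal{E}$ to the slices interpolates the classes $\pi^* L - tE$ on $X'$, I expect the differential identities
\[
\tfrac{d}{dt}(\overline{\mathcal{L}}_t^{\,n+1}) = -(n+1)\big((\pi^* L - tE)^n\big), \qquad \tfrac{d}{dt}\big(\K_{\mathcal{X}/\PP^1}\cdot \overline{\mathcal{L}}_t^{\,n}\big) = -n\big(\K_{X'}\cdot(\pi^* L - tE)^{n-1}\big) + (\text{discrepancy term}),
\]
so that integrating from $0$ to $c$ converts the two intersection numbers into $-(n+1)!\int_0^c \alpha_0(t)\,dt$ and, after accounting for the discrepancy of the normalized blowing-up along $\mathcal{E}$, into a multiple of $\int_0^c \big(\alpha_1(t) + \tfrac{1}{2}\alpha_0'(t)\big)\,dt$. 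Here the boundary contribution $\tfrac12\int_0^c \alpha_0'(t)\,dt = \tfrac12(\alpha_0(c) - \alpha_0(0))$ is exactly the term produced by the relative canonical class of the blow-up, and isolating it correctly against the $\GG_m$-weights is the delicate bookkeeping. Assembling these and dividing by the positive quantity $\int_0^c \alpha_0(t)\,dt$ then exhibits $\operatorname{DF}(\mathcal{X}, \mathcal{L}_c^{\otimes m})$ as a positive multiple of $\mu(X;L) - \mu_c(L;\Ish_Z)$, the proportionality constant being $\int_0^c \alpha_0(t)\,dt$ times the CM normalization.

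The main obstacle is that $L$, and hence $\mathcal{L}_c$, is only big and nef, so neither the classical Donaldson--Futaki weight asymptotics nor the ampleness hypotheses underlying the CM formalism are directly available. I would remove this obstruction using the continuity of the Chow--Mumford line bundle over the ample cone emphasized in the abstract: fixing an ample class $A$, the perturbation $L_\delta := L + \delta A$ is ample for every $\delta > 0$, a fixed rational $c \in (0, \epsilon(L;Z)]$ remains admissible for all such $\delta$ since the nef threshold only increases under $L \mapsto L + \delta A$, and each of $\alpha_0$, $\alpha_1$, $\mu$ and the CM degree computing $\operatorname{DF}$ is a polynomial, hence continuous, function of $\delta$. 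Applying the ample case to $L_\delta$ and letting $\delta \to 0^+$ then transports the identity to the big and nef class $L$; the final point to confirm is that the positive proportionality constant is uniform in $\delta$ and survives the limit, which follows from its expression as an intersection number on $X'$ that is itself polynomial in $\delta$.
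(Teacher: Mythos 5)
Your proposal is correct and follows essentially the same route as the paper: both reduce to the ample case by perturbing $L$ by a small ample class (noting that the fixed $c$ remains admissible since the nef threshold only increases) and then pass to the limit using the continuity of the CM-line bundle, equivalently the polynomiality in the perturbation parameter of the intersection numbers computing $\operatorname{DF}$. The only difference is that you sketch a direct derivation of the ample case via the intersection-theoretic CM degree over $\PP^1$ and fibre integration in $t$, whereas the paper simply quotes the Ross--Thomas computation together with the Riemann--Roch theorem for normal varieties.
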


Theorem \ref{Ross:Thomas:Slope:Stab:DF:intro} extends the theory of Ross and Thomas \cite[Section 4]{Ross:Thomas:2007}.  We prove it in Section \ref{extended:ross:thomas}.  A key point is the continuity property of the CM-line bundles on the ample cone (see Theorem \ref{CM:line:bundle:epsilon:zero}).  This builds on earlier work of Arezzo-et-al, Paul and Tian (see \cite{Arezzo:DellaVedova:LaNave} and the articles cited there).  We refer to Section \ref{extended:ross:thomas} for a detailed discussion of those test configurations which arise via the deformation to the normal cone.

While in many ways, Theorem \ref{Ross:Thomas:Slope:Stab:DF:intro} should be seen as a non-surprising result, to help illustrate its depth it is worthwhile to mention that---independent of the theory of the CM-line bundles---it is possible to define such slope stability invariants, along subschemes, at the level of numerical classes of divisors and \`a priori with no relation to test configurations.  

However, in a more conceptual sense, what Theorem \ref{Ross:Thomas:Slope:Stab:DF:intro} is saying is that such numerically defined invariants can be connected in a meaningful way via the theory of CM-line bundles to the classical context of test configurations.  This is a significant and aesthetically pleasing point which is a result that is of an independent interest.  

\subsection*{Conventions and other notation}  In this article, $\mathbf{k}$ denotes an algebraically closed characteristic zero base field.  By a \emph{$\kk$-variety}, or  simply \emph{variety} over $\kk$, we mean an integral finite type $\kk$-scheme.  In what follows 
$$\AA^1 := \spec \kk[t]$$ 
denotes the affine line over $\kk$ and 
$$\GG_m := \spec \kk[t,t^{-1}]$$ 
denotes the multiplicative group of nonzero elements of $\kk$.

\subsection*{Acknowledgements}
The author thanks the Natural Sciences and Engineering Research Council of Canada for their support through his grants DGECR-2021-00218 and RGPIN-2021-03821. 
It is the author's pleasure to thank colleagues for their interest and discussions on related topics.

\section{Preliminaries about test configurations}

Our approach to test configurations is similar to that of \cite{Boucksom-Hisamoto-Jonsson:2016}.  Specifically, we consider the case a projective variety $X$ together with a $\QQ$-line bundle $L$ on $X$.  Then a \emph{test configuration} $(\mathcal{X},\mathcal{L})$ for $X$ with respect to $L$ consists of a $\GG_m$-equivariant flat and proper morphism
\begin{equation}\label{test:config:eqn:1}
\pi \colon \mathcal{X} \rightarrow \AA^1
\end{equation}
which has the property that 
\begin{equation}\label{test:config:eqn:2}
\mathcal{X}\big|_{\pi^{-1}(t)} \simeq X \text{ for $t \in \AA^1 \setminus \{0\}$}
\end{equation}
together with a $\GG_m$-linearized $\QQ$-line bundle $\mathcal{L}$ on $\mathcal{X}$ and an isomorphism
\begin{equation}\label{test:config:eqn:3}
\mathcal{L}\big|_{\pi^{-1}(t)} \simeq L  \text{ for $t \in \AA^1 \setminus \{0\}$.}
\end{equation}
The isomorphisms of $\QQ$-line bundles \eqref{test:config:eqn:3} are required to be compatible with the isomorphism \eqref{test:config:eqn:2}.  The $\GG_m$-action on $\mathcal{X}$ and $\mathcal{L}$ respectively \emph{cover} the canonical action of $\GG_m$ on $\AA^1$.

As in \cite{Mum:GIT}, our conventions for $\GG_m$-actions are such that if $\GG_m$ acts on the total space of $\mathcal{L}$ by $t^\lambda$, for $\lambda \in \ZZ$, then the linearized action on $\mathcal{L}$ has the property that $\GG_m$ acts by $t^{-\lambda}$.   

\begin{example}
For the case that $L$ and $\mathcal{L}$ are respectively ample and $\pi$-ample then we say that $(\mathcal{L},\mathcal{X})$ is an \emph{ample test configuration}.  More generally, if $L$ is big, respectively nef, then we say that $(\mathcal{X},\mathcal{L})$ is a \emph{big}, respectively \emph{nef}, test configuration if $\mathcal{L}$ is big, respectively nef, over $X$.
  Note, in particular, that our conventions for $\GG_m$-actions are as in \cite{Boucksom-Hisamoto-Jonsson:2016}.  So, in particular they are such that for ample test configurations $(\mathcal{X},\mathcal{L})$ the \emph{Donaldson-Futaki invariant} $\operatorname{DF}(\mathcal{X},\mathcal{L})$ arises in the asymptotic expansion as
\begin{equation}\label{DF:expansion:eqn}
\begin{split}
\frac{w_m}{m h^0(X,mL)} & = \sum_{\lambda \in \ZZ} \lambda \dim \H^0(X,mL)_{\lambda} \\
&= \int_{\RR} \lambda \operatorname{DH}_{(X,L)} \mathrm{d}\lambda - \frac{\operatorname{DF}(\mathcal{X},\mathcal{L})}{m} + \mathrm{O}(m^{-2}) \text{.}
\end{split}
\end{equation}
Here, $\H^0(X,mL)_{\lambda}$ is the $\lambda$-weight space of the induced $\GG_m$-action of $\H^0(X,mL)$ whereas $w_m$ is the weight of the induced action of $\GG_m$ on $\operatorname{det} \H^0(X,mL)$.  

The quantity 
$$
\int_{\RR} \lambda \operatorname{DH}_{(X,L)} \mathrm{d}\lambda
$$
admits a description as the expectation of the {D}uistermaat-{H}eckman measures.  It can be calculated via the theory of concave transforms for Newton-Okounkov bodies (see for example \cite{Grieve:MVT:2019}, \cite{Grieve:chow:approx} and the references therein).  In \eqref{DF:expansion:eqn}, and elsewhere in this article, we note that our conventions for $\operatorname{DF}(\mathcal{X},\mathcal{L})$ differ by a positive scalar multiple from those of \cite{Boucksom-Hisamoto-Jonsson:2016}.  

By our conventions, which are mostly consistent with those of \cite{Boucksom-Hisamoto-Jonsson:2016}, a polarized projective variety $(X,L)$ is \emph{$\K$-semistable} if 
$\operatorname{DF}(\mathcal{X},\mathcal{L}) \geq 0$ for all ample test configurations $(\mathcal{X},\mathcal{L})$.
A polarized projective variety $(X,L)$ is \emph{$\K$-stable} if it is $\K$-semistable and $\operatorname{DF}(\mathcal{X},\mathcal{L}) = 0$ exactly when $(\mathcal{X},\mathcal{L})$ is \emph{almost trivial} in the sense that if $\mathcal{X}'$ is the normalization of $\mathcal{X}$ and if $\mathcal{L}'$ is the pull-back of $\mathcal{L}$ to $\mathcal{X}'$, then 
$\mathcal{X}' \simeq X_{\AA^1} := X \times \AA^1$, $\mathcal{L}' \simeq  L_{\AA^1}$ and $\GG_m$ acts by a character. 
\end{example}

\section{Preliminaries about CM-line bundles}

We recall a construction of CM-line bundles following the approach from \cite{Arezzo:DellaVedova:LaNave}.  In particular, let $f \colon X \rightarrow B$ be a flat family of $n$-dimensional projective schemes with $n \geq 1$.  Let $L$ be a line bundle on $X$ and for $m \geq 0$, let $\mathbf{R}f_*(L^{\otimes m})$ be the derived pushforward of $L^{\otimes m}$.  Here, we view $L^{\otimes m}$ as a complex of perfect sheaves on $X$ and supported in cohomological degree zero.  In particular, the rank of $\mathbf{R}f_*(L^{\otimes m})$ equals the Euler characteristic of $L^{\otimes m}$ restricted to any, and all, fibres of $f$.

In this context, there exists a polynomial expansion
$$
\operatorname{rank} \mathbf{R}f_*(L^{\otimes m}) = a_0 m^n + a_1 m^{n-1} + \dots + a_n
$$
with $a_i \in \QQ$.  Further, there are line bundles $\nu_i$ on $B$ and depending on $f$ and $L$ which are such that 
$$
\operatorname{det} \mathbf{R}f_*(L^{\otimes m}) = \nu_0^{\otimes \binom{m}{n+1} } \otimes \nu_1^{\otimes \binom{m}{n}} \otimes \dots \otimes \nu_{n+1} \text{.}
$$
Put differently, there exist $\QQ$-line bundles $\mu_i$ on $B$ which are such that 
$$
\operatorname{det} \mathbf{R} f_*(L^{\otimes m}) = \mu_0^{\otimes m^{n+1}} \otimes \mu_1^{\otimes m^n} \otimes \dots \otimes \mu_{n+1} \text{.}
$$
So, in particular, we may consider the following asymptotic expansion
$$
\operatorname{det} \mathbf{R}f_* (L^{\otimes m})^{\frac{1}{\operatorname{rank} \mathbf{R} f_*(L^{\otimes m})}} = \mu_0^{\otimes \frac{m}{a_0}} \otimes (\mu_1^{a_0} \otimes \mu_0^{-a_1})^{\otimes \frac{1}{a_0^2}} \otimes \mathrm{O}\left(\frac{1}{m}\right) 
$$
as $m \to \infty$.

In this context, the CM-line bundle $\lambda_{\mathrm{CM}}(X,L)$ is defined to be the $\QQ$-line bundle on $B$ that is defined by the condition that
$$
\lambda_{\mathrm{CM}}(X,L) = \left( \mu_1^{\otimes a_0} \otimes \mu_0^{\otimes -a_1} \right)^{\otimes \frac{1}{a_0^2}} \text{.}
$$
For later use, we record the following two properties of CM-line bundles.

\begin{proposition}\label{CM:line:bundle:first:properties}
Let $f \colon X \rightarrow B$ be a flat family of $n$-dimensional projective schemes.  Suppose that $L$ is an $f$-relative big and nef line bundle on $X$.   Then, the following assertions hold true.
\begin{itemize}
\item[(i)]{
$\lambda_{\mathrm{CM}}(X,L^{\otimes m}) = \lambda_{\mathrm{CM}}(X,L)$ for $m > 0$.
}
\item[(ii)]{
Suppose that $A$ is an $f$-ample line bundle on $X$.  Then
$$
\lambda_{\mathrm{CM}}(X,L^{\otimes m} \otimes A) = \lambda_{\mathrm{CM}}(X,L) \otimes \mathrm{O} \left( \frac{1}{m} \right) \text{ as $m \to \infty$.}
$$
}
\end{itemize}
\end{proposition}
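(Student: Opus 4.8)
The plan is to prove both assertions by tracking how the four data that enter the definition of $\lambda_{\mathrm{CM}}$ — the leading rank-coefficients $a_0, a_1$ and the leading $\QQ$-line-bundle coefficients $\mu_0, \mu_1$ of $\operatorname{det}\mathbf{R}f_*$ — transform under the operations $L \mapsto L^{\otimes m}$ and $L \mapsto L^{\otimes m}\otimes A$, and then to verify the resulting cancellations in the formula $\lambda_{\mathrm{CM}}(X,L) = (\mu_1^{\otimes a_0}\otimes \mu_0^{\otimes -a_1})^{\otimes 1/a_0^2}$. Throughout I use that $L$ being $f$-relatively big and nef forces $a_0 = (L^n)/n! > 0$, so every division is legitimate.

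For (i) I would argue by pure substitution. Setting $L' = L^{\otimes m}$, the identity $L'^{\otimes k} = L^{\otimes mk}$ gives $\operatorname{rank}\mathbf{R}f_*(L'^{\otimes k}) = P(mk)$, where $P(m) = a_0 m^n + a_1 m^{n-1} + \dots$ is the rank polynomial of $L$; collecting powers of $k$ yields $a_i' = a_i m^{n-i}$, so $a_0' = a_0 m^n$ and $a_1' = a_1 m^{n-1}$. The same substitution in $\operatorname{det}\mathbf{R}f_*(L^{\otimes mk}) = \mu_0^{\otimes (mk)^{n+1}}\otimes\mu_1^{\otimes(mk)^n}\otimes\dots$ gives $\mu_i' = \mu_i^{\otimes m^{n+1-i}}$, so $\mu_0' = \mu_0^{\otimes m^{n+1}}$ and $\mu_1' = \mu_1^{\otimes m^n}$. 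Substituting into $\lambda_{\mathrm{CM}}(X,L') = ((\mu_1')^{\otimes a_0'}\otimes(\mu_0')^{\otimes -a_1'})^{\otimes 1/(a_0')^2}$, the inner tensor factor becomes $(\mu_1^{\otimes a_0}\otimes\mu_0^{\otimes -a_1})^{\otimes m^{2n}}$ while the outer exponent becomes $1/(a_0^2 m^{2n})$; every power of $m$ cancels and one recovers $\lambda_{\mathrm{CM}}(X,L)$ exactly.

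For (ii) the substitution trick fails, since $L^{\otimes m}\otimes A$ mixes $L$ and $A$, so I would pass to the intersection-theoretic description of the coefficients furnished by Grothendieck–Riemann–Roch and the Knudsen–Mumford expansion for the flat family $f$. Writing $L_m := L^{\otimes m}\otimes A$, $D = c_1(L)$ and $H = c_1(A)$, these give the fibrewise Hilbert-polynomial coefficients $a_0(L_m) = ((mD + H)^n)/n!$ and $a_1(L_m) = -(\K_{X/B}\cdot(mD+H)^{n-1})/(2(n-1)!)$, together with $c_1(\mu_0(L_m)) = f_*((mD+H)^{n+1})/(n+1)!$ and $c_1(\mu_1(L_m)) = -f_*((mD+H)^n\cdot \K_{X/B})/(2\,n!)$. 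Expanding each in $m$ and extracting the top term — the leading coefficients of $a_0(L_m)$ and of $c_1(\mu_0(L_m))$ are governed by $(D^n) > 0$ — gives $a_0(L_m) = \alpha_0 m^n(1+\mathrm{O}(1/m))$, $a_1(L_m) = \alpha_1 m^{n-1}(1+\mathrm{O}(1/m))$, $c_1(\mu_0(L_m)) = M_0 m^{n+1} + \mathrm{O}(m^n)$ and $c_1(\mu_1(L_m)) = M_1 m^n + \mathrm{O}(m^{n-1})$, where $\alpha_0, \alpha_1, M_0, M_1$ are precisely the corresponding invariants of $L$ itself.

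Feeding these into $c_1(\lambda_{\mathrm{CM}}(X,L_m)) = (a_0(L_m)\,c_1(\mu_1(L_m)) - a_1(L_m)\,c_1(\mu_0(L_m)))/a_0(L_m)^2$, both numerator and denominator are homogeneous of degree $2n$ in $m$ to leading order: the numerator is $(\alpha_0 M_1 - \alpha_1 M_0)m^{2n} + \mathrm{O}(m^{2n-1})$ and the denominator is $\alpha_0^2 m^{2n}(1+\mathrm{O}(1/m))$. The factors $m^{2n}$ cancel, leaving $c_1(\lambda_{\mathrm{CM}}(X,L_m)) = (\alpha_0 M_1 - \alpha_1 M_0)/\alpha_0^2 + \mathrm{O}(1/m) = c_1(\lambda_{\mathrm{CM}}(X,L)) + \mathrm{O}(1/m)$, which is the claim. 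The main obstacle is the bookkeeping in (ii): one must justify the Knudsen–Mumford/GRR expressions for $\mu_0,\mu_1$ in the relative setting (valid for $f$ with possibly singular fibres handled by the derived pushforward), confirm $\alpha_0 > 0$ so the division makes sense, and interpret $\mathrm{O}(1/m)$ coherently as a statement in $\Pic(B)_{\QQ}$, namely that the class in question has coefficients of size $\mathrm{O}(1/m)$ relative to a fixed finite basis. Part (i) is then purely formal and serves as a warm-up for the degree-counting that drives (ii).
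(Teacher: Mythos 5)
Your argument is correct in outline, but it is worth knowing that the paper does not actually prove this proposition: its ``proof'' is a citation to \cite[Propositions 2.7 and 2.8]{Arezzo:DellaVedova:LaNave}, so what you have written is a reconstruction of the argument in that reference rather than a variant of anything in this paper. Your part (i) --- substituting $j = mk$ into the rank polynomial and the Knudsen--Mumford expansion, reading off $a_i' = a_i m^{n-i}$ and $\mu_i' = \mu_i^{\otimes m^{n+1-i}}$, and watching the powers of $m$ cancel in $(\mu_1^{\otimes a_0}\otimes\mu_0^{\otimes -a_1})^{\otimes 1/a_0^2}$ --- is exactly the standard proof and is complete. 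For part (ii) your degree count is the right mechanism and the conclusion is correct, but the route you take through Grothendieck--Riemann--Roch is heavier than necessary and is the one genuinely delicate point: for a flat family whose total space and fibres may be singular, the identities $c_1(\mu_0) = f_*(D^{n+1})/(n+1)!$ and $c_1(\mu_1) = -f_*(D^n\cdot \K_{X/B})/(2\,n!)$ require justifying both GRR and the meaning of $\K_{X/B}$, which you flag but do not resolve. The cited reference avoids this entirely by a purely formal device: apply Knudsen--Mumford to the two-parameter family $\operatorname{det}\mathbf{R}f_*(L^{\otimes j}\otimes A^{\otimes k})$, which is a polynomial of total degree at most $n+1$ in $(j,k)$ with $\QQ$-line-bundle coefficients; restricting to $(j,k)=(mk,k)$ and extracting the coefficients of $k^{n+1}$ and $k^{n}$ shows directly that $\mu_0(L^{\otimes m}\otimes A) = \mu_0(L)^{\otimes m^{n+1}}\otimes(\text{lower order in }m)$ and $\mu_1(L^{\otimes m}\otimes A) = \mu_1(L)^{\otimes m^{n}}\otimes(\text{lower order in }m)$, after which your cancellation of $m^{2n}$ goes through verbatim in $\Pic(B)_{\QQ}$ with no intersection theory needed. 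If you replace the GRR step by this bi-polynomiality argument, your proof is complete and self-contained; as written, it has one outsourced step where the reference has none.
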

\begin{proof}
Item (i) is \cite[Proposition 2.7 (1)]{Arezzo:DellaVedova:LaNave}.  Item (ii) is \cite[Proposition 2.8]{Arezzo:DellaVedova:LaNave} and is a consequence of \cite[Proposition 2.7 (1) \& (2)]{Arezzo:DellaVedova:LaNave}.
\end{proof}

Combining Kleiman's theorem about numerical characterization of ample divisors, see for example \cite[Corollary 1.4.10]{Laz}, with Proposition \ref{CM:line:bundle:first:properties} gives rise to Theorem \ref{CM:line:bundle:epsilon:zero} below.  Its essential content, in particular, is that in calculating DF-invariants for big and nef line bundles we can pass to the limit $\epsilon \to 0$.  

\begin{theorem}\label{CM:line:bundle:epsilon:zero}
Let $f \colon X \rightarrow B$ be a flat family of $n$-dimensional projective schemes.  Suppose that $L$ is a big and nef line bundle on $X$.  Fix an ample line bundle $A$ on $X$.  Then
$$\lambda_{\mathrm{CM}}(X,L) = \lim\limits_{\epsilon \to 0} \lambda_{\mathrm{CM}}(X,L \otimes A^{\otimes \epsilon})  \text{.} $$
\end{theorem}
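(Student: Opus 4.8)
The plan is to deduce the continuity statement from the two functorial properties recorded in Proposition \ref{CM:line:bundle:first:properties}, together with Kleiman's numerical characterization of ampleness. The role of Kleiman's theorem is to guarantee that the perturbed classes remain in the domain where the CM-line bundle is defined and where those properties apply: since $L$ is nef and $A$ is ample, the $\QQ$-line bundle $L \otimes A^{\otimes \epsilon}$ is ample for every rational $\epsilon > 0$, and in particular it is big and nef. Thus the entire family $\{ \lambda_{\mathrm{CM}}(X, L \otimes A^{\otimes \epsilon}) \}_{\epsilon > 0}$ consists of well-defined $\QQ$-line bundles on $B$, whose classes I would regard as living in the finite-dimensional real vector space $\NS(B)_{\RR} := \NS(B) \otimes_{\ZZ} \RR$, in which the limit is to be taken.

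First I would establish convergence along the distinguished sequence $\epsilon = 1/m$. For each positive integer $m$ the $\QQ$-line bundle $M_m := L \otimes A^{\otimes 1/m}$ is ample, and its $m$-th tensor power is the genuine line bundle $M_m^{\otimes m} = L^{\otimes m} \otimes A$. Applying the homogeneity property Proposition \ref{CM:line:bundle:first:properties}(i) to the big and nef $\QQ$-line bundle $M_m$ — interpreting $\lambda_{\mathrm{CM}}$ on $M_m$ as the common value of its integral powers, which is legitimate precisely because of (i) — gives
$$
\lambda_{\mathrm{CM}}(X, L \otimes A^{\otimes 1/m}) = \lambda_{\mathrm{CM}}\bigl(X, (L \otimes A^{\otimes 1/m})^{\otimes m}\bigr) = \lambda_{\mathrm{CM}}(X, L^{\otimes m} \otimes A) \text{.}
$$
Next I would invoke Proposition \ref{CM:line:bundle:first:properties}(ii), with the $f$-ample line bundle $A$, to rewrite the right-hand side as $\lambda_{\mathrm{CM}}(X, L) \otimes \mathrm{O}(1/m)$. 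Since the error term $\mathrm{O}(1/m)$ denotes a class whose norm is bounded by a constant multiple of $1/m$ in $\NS(B)_{\RR}$, it tends to $0$ as $m \to \infty$, and hence $\lambda_{\mathrm{CM}}(X, L \otimes A^{\otimes 1/m}) \to \lambda_{\mathrm{CM}}(X, L)$.

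It then remains to promote this subsequential limit to the genuine limit as $\epsilon \to 0$. For this I would use that the first Chern class $c_1\bigl(\lambda_{\mathrm{CM}}(X, L \otimes A^{\otimes \epsilon})\bigr)$ is given, via the defining asymptotic expansion of the CM-line bundle, by a ratio of $f$-fibrewise intersection numbers formed from $c_1(L) + \epsilon\, c_1(A)$ and $c_1(\K_{X/B})$. Because $L$ is big and nef, the leading coefficient $a_0 = ((L \otimes A^{\otimes \epsilon})^n)/n!$ is strictly positive and varies polynomially in $\epsilon$, with nonzero value at $\epsilon = 0$; consequently $\epsilon \mapsto c_1(\lambda_{\mathrm{CM}}(X, L \otimes A^{\otimes \epsilon}))$ is a continuous function on a neighbourhood of $\epsilon = 0$ in $\NS(B)_{\RR}$, with no pole at the origin. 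Its limit as $\epsilon \to 0$ therefore agrees with its limit along $\epsilon = 1/m$, which the previous paragraph identifies as $\lambda_{\mathrm{CM}}(X, L)$, yielding the asserted equality.

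I expect the main obstacle to be this last step: one must know that the intersection-theoretic formula for $\lambda_{\mathrm{CM}}$ remains valid, and varies continuously, as the class crosses from the ample interior to the big-and-nef boundary. This is precisely where Kleiman's theorem is essential, since it identifies the nef class $L$ as a boundary limit of the ample classes $L \otimes A^{\otimes \epsilon}$; the required continuity then rests on the fact that, for big and nef $L$, the higher cohomology of $L^{\otimes m}$ is asymptotically negligible, so that $a_0$ and $a_1$ are still computed by the expected intersection numbers and $\lambda_{\mathrm{CM}}(X, L)$ is the correct boundary value.
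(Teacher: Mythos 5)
Your argument is essentially the paper's own proof: restrict to $\epsilon = 1/m$, note via Kleiman that $L \otimes A^{\otimes 1/m}$ is ample, use homogeneity (Proposition \ref{CM:line:bundle:first:properties}(i)) to identify $\lambda_{\mathrm{CM}}(X, L \otimes A^{\otimes 1/m})$ with $\lambda_{\mathrm{CM}}(X, L^{\otimes m} \otimes A)$, and conclude by the perturbation estimate (ii). Your final paragraph, promoting the subsequential limit along $\epsilon = 1/m$ to the full limit $\epsilon \to 0$ via polynomial dependence of the intersection numbers on $\epsilon$ and positivity of $a_0$, supplies a justification for the paper's opening assertion that ``it suffices to consider'' $\epsilon = 1/m$, which the paper states without elaboration.
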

\begin{proof}
It suffices to consider sufficiently small rational numbers 
$$\epsilon = \frac{1}{m} > 0$$ 
for sufficiently large positive integers $m$.  Indeed, by Kleiman's theorem, \cite[Corollary 1.4.10]{Laz}, for all such positive integers $m$, the class $L \otimes A^{\otimes \frac{1}{m}}$ is ample.  

On the other hand, we can also write 
$$\left(L \otimes A^{\otimes \frac{1}{m}} \right)^{\otimes m} = L^{\otimes m} \otimes A \text{.}$$  
Thus, the desired conclusion follows as a consequence of Proposition \ref{CM:line:bundle:first:properties} (i) and (ii).
\end{proof}

\section{Calculation of DF-invariants via the theory of CM-line bundles}

Consider now the case of a test configuration $f \colon (\mathcal{X}, \mathcal{L}) \rightarrow \AA^1$ for a big and nef line bundle $L$ on a projective variety $X$.  So, in particular, the $\mathbb{G}_m$-action on $X$ lifts to a linearized action of $\mathbb{G}_m$ on $L$.

In this context, the CM-line bundle $\lambda_{\mathrm{CM}}(X,L)$ admits a natural $\mathbb{G}_m$-linearization.  Further, as noted by Paul and Tian, compare also with the discussion given in \cite[Remark 2.3]{Arezzo:DellaVedova:LaNave}, the weight of the $\mathbb{G}_m$-action on the fibre  $\lambda_{\mathrm{CM}}(X,L)|_{t = 0}$ equals the Donaldson-Futaki invariant $\operatorname{DF}(X,L)$ of the test configuration $(\mathcal{X},L)$.  Here, and elsewhere our conventions for $\mathbb{G}_m$-actions and Donaldson-Futaki invariants are such that their non-negativity for all test configurations characterizes $\K$-semistability for $X$ with respect to $L$.  In this regard, our conventions differ by a minus sign from those of \cite{Arezzo:DellaVedova:LaNave}.

For later use, we record this important insight, of Paul and Tian, in Theorem \ref{Paul:Tian:DF:Thm} below.

\begin{theorem}[Paul and Tian; Arezzo-et-al {\cite[Proposition 3.3]{Arezzo:DellaVedova:LaNave}}]\label{Paul:Tian:DF:Thm}
Let $(\mathcal{X},\mathcal{L})$ be a big and nef test configuration for $L$ a big and nef line bundle on a projective variety $X$.  Then the Donaldson-Futaki invariant $\operatorname{DF}(\mathcal{X},\mathcal{L})$ equals the weight of the induced $\mathbb{G}_m$-action on the fibre $\lambda_{\mathrm{CM}}(X,L)|_{t = 0}$ for $\lambda_{\mathrm{CM}}(X,L)$ the CM-line bundle that is determined by $(\mathcal{X},\mathcal{L})$.
\end{theorem}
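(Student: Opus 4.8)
The plan is to unwind the definition of $\lambda_{\mathrm{CM}}(X,L)$, compute the weight of the $\GG_m$-action on its fibre over the fixed point $t = 0$, and match this against the asymptotic expansion \eqref{DF:expansion:eqn} that defines $\operatorname{DF}(\mathcal{X},\mathcal{L})$. The starting observation is that, for the test configuration $f \colon \mathcal{X} \rightarrow \AA^1$, the fibre of $\operatorname{det} \mathbf{R}f_*(L^{\otimes m})$ over $t = 0$ is canonically identified with the determinant of the cohomology $\mathbf{R}\Gamma(X_0, L_0^{\otimes m})$ of the central fibre $(X_0, L_0)$. Since $t = 0$ is $\GG_m$-fixed, this is a one-dimensional $\GG_m$-representation, and I would first argue that its weight is precisely the integer $w_m$ appearing in \eqref{DF:expansion:eqn}. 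Because $L$ is big and nef, Fujita-type vanishing gives $\H^i(X_0, L_0^{\otimes m}) = 0$ for $i > 0$ and $m \gg 0$, so that $\operatorname{rank} \mathbf{R}f_*(L^{\otimes m}) = \chi(L_0^{\otimes m}) = h^0(X, mL)$ and the derived determinant reduces to $\operatorname{det} \H^0(X_0, L_0^{\otimes m})$; its $\GG_m$-weight is then $w_m$.

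Next I would pass to asymptotics. Writing the two expansions
\begin{equation*}
\operatorname{rank} \mathbf{R}f_*(L^{\otimes m}) = a_0 m^n + a_1 m^{n-1} + \dots, \qquad w_m = b_0 m^{n+1} + b_1 m^n + \dots,
\end{equation*}
where $b_i$ is the $\GG_m$-weight of the fibre $\mu_i|_{t=0}$ coming from the factorization $\operatorname{det} \mathbf{R}f_*(L^{\otimes m}) = \mu_0^{\otimes m^{n+1}} \otimes \mu_1^{\otimes m^n} \otimes \dots$ and using additivity of weights, I would divide and expand in powers of $m^{-1}$:
\begin{equation*}
\frac{w_m}{m\, h^0(X,mL)} = \frac{b_0}{a_0} + \frac{a_0 b_1 - a_1 b_0}{a_0^2}\cdot \frac{1}{m} + \mathrm{O}(m^{-2}) \text{.}
\end{equation*}
Comparing with \eqref{DF:expansion:eqn}, the constant term $b_0/a_0$ recovers the Duistermaat-Heckman expectation $\int_{\RR} \lambda \operatorname{DH}_{(X,L)} \mathrm{d}\lambda$, while the coefficient of $m^{-1}$ is, up to the sign fixed by our linearization conventions, the invariant $\operatorname{DF}(\mathcal{X},\mathcal{L})$.

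Finally I would identify this coefficient with the weight of $\lambda_{\mathrm{CM}}(X,L)|_{t=0}$. By the defining formula $\lambda_{\mathrm{CM}}(X,L) = (\mu_1^{\otimes a_0} \otimes \mu_0^{\otimes -a_1})^{\otimes 1/a_0^2}$ and additivity of weights over $\otimes$ and $\QQ$-powers, the $\GG_m$-weight of its fibre at $t = 0$ is exactly $(a_0 b_1 - a_1 b_0)/a_0^2$, which is the $m^{-1}$ coefficient computed above. With the sign conventions recorded in Section~2 (and the stated minus-sign discrepancy with \cite{Arezzo:DellaVedova:LaNave}) this equals $\operatorname{DF}(\mathcal{X},\mathcal{L})$, as claimed.

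The main obstacle is the base-change identification in the first step: that the restriction of $\operatorname{det} \mathbf{R}f_*(L^{\otimes m})$ to the fixed fibre $t = 0$ is compatible, $\GG_m$-equivariantly, with the determinant-of-cohomology of the central fibre, and that its weight is genuinely the $w_m$ of \eqref{DF:expansion:eqn} rather than some twist. This is where the big and nef hypothesis is essential: it is needed both to make the Euler-characteristic computation agree with the $h^0$-level computation in \eqref{DF:expansion:eqn} via vanishing of higher cohomology for $m \gg 0$, and to ensure that $\mathbf{R}f_*(L^{\otimes m})$ is a genuine vector bundle near $t = 0$ so that the determinant and its linearization behave as expected. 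A secondary, purely bookkeeping, difficulty is to track the sign conventions consistently so that the final identity is an equality on the nose.
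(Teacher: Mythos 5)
The paper itself offers no argument for this statement: its proof is the single line ``See \cite[Proposition 3.3]{Arezzo:DellaVedova:LaNave}'', so your proposal is really being measured against the proof in that reference. Your skeleton --- base-change the perfect complex $\mathbf{R}f_*(\mathcal{L}^{\otimes m})$ to the fixed point $t=0$ to read off a total weight $w_m$, expand $w_m/(m\cdot\operatorname{rank}\mathbf{R}f_*(\mathcal{L}^{\otimes m}))$ in powers of $m^{-1}$, and identify the coefficient of $m^{-1}$ with the weight of $(\mu_1^{\otimes a_0}\otimes\mu_0^{\otimes -a_1})^{\otimes 1/a_0^2}$ at $t=0$ --- is exactly the mechanism behind the cited proposition, and your computation of that coefficient as $(a_0b_1-a_1b_0)/a_0^2$ is correct.

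The genuine gap is the vanishing step. There is no ``Fujita-type vanishing'' for a big and nef line bundle: Fujita's theorem requires an ample factor, and Kawamata--Viehweg requires at least a normal variety with mild singularities, whereas the central fibre $X_0$ of a test configuration --- e.g.\ $\mathcal{E}+\mathcal{F}$ in Section \ref{extended:ross:thomas} --- is typically reducible and non-normal, so you cannot conclude $\H^i(X_0,\mathcal{L}^{\otimes m}|_{X_0})=0$ for $i>0$. Worse, the identification $\operatorname{rank}\mathbf{R}f_*(\mathcal{L}^{\otimes m})=h^0(X,mL)$ fails at exactly the order that matters: by flatness the rank is $\chi(X,mL)$, and for big and nef $L$ on the normal general fibre one only has $h^i(X,mL)=\mathrm{O}(m^{n-i})$, so $h^0(X,mL)-\chi(X,mL)=\mathrm{O}(m^{n-1})$, which perturbs the subleading coefficient $a_1$ entering the definition of $\lambda_{\mathrm{CM}}$. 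The repair, and what \cite{Arezzo:DellaVedova:LaNave} actually do, is to phrase everything in terms of Euler characteristics and derived determinants: take $w_m$ to be the weight of the $\GG_m$-action on $\det\mathbf{R}\Gamma(X_0,\mathcal{L}^{\otimes m}|_{X_0})$ (an alternating sum over all cohomological degrees) and divide by $m\,\chi(X,mL)$; then your expansion goes through with no vanishing hypothesis and the $m^{-1}$ coefficient is the weight of $\lambda_{\mathrm{CM}}|_{t=0}$ essentially by definition. This does shift the burden: one must then either take that Euler-characteristic expansion as the \emph{definition} of $\operatorname{DF}$ for big and nef test configurations (which is in effect what the paper does, since \eqref{DF:expansion:eqn} is only asserted for ample ones), or prove separately that it agrees with the $h^0$-level quantity. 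Finally, with the paper's conventions the expansion carries the term $-\operatorname{DF}(\mathcal{X},\mathcal{L})/m$, so ``up to the sign fixed by our linearization conventions'' must be resolved explicitly via the Mumford $t^{-\lambda}$ convention; as written your argument establishes the statement only up to sign.
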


\begin{proof}
See \cite[Proposition 3.3]{Arezzo:DellaVedova:LaNave}.
\end{proof}

As an illustration of Proposition \ref{CM:line:bundle:first:properties}  and Theorem \ref{Paul:Tian:DF:Thm}, we want to explain how they allow for a description of the Donaldson-Futaki invariants, for big and nef line bundles, that arise via the deformation to the normal cone.  Recall, that those test configurations that arise via the deformation to the normal cone are a central feature to the Ross-Thomas theory of slope stability (\cite{Ross:Thomas:2007} and \cite{Ross:Thomas:2006}).  We discuss this in detail in Section \ref{extended:ross:thomas}.

\section{Slope stability along subschemes with respect to the numerical class of a big and nef line bundle}\label{extended:ross:thomas}

Let $Z \subsetneq X$ be a subscheme of a normal projective variety $X$ and let
\begin{equation}\label{def:normal:cone}
\mathcal{X} = \operatorname{DefNC}(X,Z) := \operatorname{Bl}_{Z \times \{0\}}(X_{\AA^1}) \rightarrow X_{\AA^1}
\end{equation}
be the normalized blowing-up of 
$$
X_{\AA^1} := X \times \AA^1
$$
along $Z \times \{0\}$.  As in \cite[Example, 2.4]{Boucksom-Hisamoto-Jonsson:2016}, see also \cite[Section 4]{Ross:Thomas:2007}, we say that $\mathcal{X}$ is the \emph{deformation to the normal cone} of $X$ with respect to $Z$.

Let $\mathcal{X}_0$ be the fiber of $\mathcal{X}$ over $t = 0 \in \AA^1$ with respect to the naturally induced map $\mathcal{X} \rightarrow \AA^1$.  Then 
$$\mathcal{X}_0 \subseteq \mathcal{X}$$ 
is a divisor and is described as 
$$\mathcal{X}_0 = \mathcal{E} + \mathcal{F} \text{.}$$  
Here $\mathcal{E}$ is the exceptional divisor of the normalized blowing-up morphism \eqref{def:normal:cone} and
$$
\mathcal{F} \simeq \operatorname{Bl}_Z(X)
$$
for
$$
\pi \colon X' :=  \operatorname{Bl}_Z(X) \rightarrow X
$$
the normalized blowing-up of $X$ along $Z$ is the strict transform of $X \times \{0\}$ with respect to the natural  map \eqref{def:normal:cone}.

Let $L$ be a big and nef line bundle on $X$.  Denote by
$$
\epsilon(L;Z) := \sup \{ t \in \RR_{\geq 0} : \pi^* L - t E \text{ is nef} \}
$$
its \emph{nef-threshold} (\emph{Seshadri constant}) with respect to $Z$.

For rational numbers 
$$c \in (0,\epsilon(L;Z)] \bigcap \QQ$$ 
and sufficiently divisible integers $m$ so that $mc \in \ZZ$, denote by $\mathcal{L}^{\otimes m}_c$ the line bundle on $\mathcal{X}$ that is defined by the condition that 
$$
\mathcal{L}_c^{\otimes m} = p^* L^{\otimes m} - c m \mathcal{E} \text{.}
$$
Here 
$p \colon \mathcal{X} \rightarrow X$
denotes the natural map.

Then for each such 
$$c \in (0,\epsilon(L;Z)] \bigcap \QQ$$ 
and each such sufficiently divisible integer $m$, $(\mathcal{X},\mathcal{L}_c^{\otimes m})$ is a big and nef test configuration of $X$ with respect to the big and nef line bundle $L$.

We want to apply the theory of CM-line bundles to describe the DF-invariant of such test configurations $(\mathcal{X}, \mathcal{L}^{\otimes m}_c)$ in terms of those that are constructed from small ample perturbations of $L$.  Such considerations give rise to the following more general form of the very useful and explicit Ross-Thomas description of the Donaldson-Futaki invariants for the case of deformation to the normal cone test configurations with respect to ample line bundles.  

Indeed, Theorem \ref{Ross:Thomas:Slope:Stab:DF} below, which we also stated as Theorem \ref{Ross:Thomas:Slope:Stab:DF:intro} in Section \ref{intro}, applies the theory of CM-line bundles, in the form of Corollary \ref{CM:line:bundle:epsilon:zero}.  It is consequence of the work of Ross and Thomas \cite[Section 4]{Ross:Thomas:2007} combined with the Riemann-Roch Theorem for normal varieties (e.g., in the form of \cite[Theorem A.1]{Boucksom-Hisamoto-Jonsson:2016}).  

\begin{theorem}\label{Ross:Thomas:Slope:Stab:DF}
Suppose that $L$ is a big and nef line bundle on an $n$-dimensional normal projective variety $X$.  Suppose that $Z \subsetneq X$ is a proper subscheme and with ideal sheaf $\Ish_Z$.  Let $\epsilon(L;Z)$ denote the nef threshold of $X$ with respect to $Z$.  For each $0 \leq t \leq \epsilon(L;Z)$ set
\begin{equation}\label{alpha0}
\alpha_0(L;Z; t) := \frac{((\pi^*L - t E)^n )}{n!}
\end{equation}
\begin{equation}\label{alpha1}
\alpha_1(L;Z; t) := - \frac{(\K_{X'} \cdot(\pi^*L - t E)^{n-1})}{2(n-1)!}
\end{equation}
and
\begin{equation}\label{slope}
\mu(X;L) := \frac{\alpha_1}{\alpha_0} = \frac{\alpha_1(0)}{\alpha_0(0)} \text{.}
\end{equation}
For each 
$c \in (0, \epsilon(L;Z)] \bigcap \QQ$
set
$$
\mu_c(L;\Ish_Z) := \frac{ \int_0^c \left( \alpha_1(L;Z;t) + \frac{\alpha_0'(L;Z;t)}{2} \right) \mathrm{d}t }{ \int_0^c \alpha_0(L;Z;t) \mathrm{d}t } \text{.}
$$
Fix a sufficiently divisible integer $m \in \ZZ$ so that $mc \in \ZZ$.  Then, with this notation and hypothesis the test configuration $(\mathcal{X}, \mathcal{L}_c^{\otimes m})$ has Donaldson-Futaki invariant $\operatorname{DF}(\mathcal{X},\mathcal{L}_c^{\otimes m})$ equal to some positive scalar multiple of $\mu(X;L) - \mu_c(L;\Ish_Z)$.
\end{theorem}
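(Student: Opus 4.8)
The plan is to combine the three structural results assembled above: the Paul--Tian identity (Theorem \ref{Paul:Tian:DF:Thm}) identifying $\operatorname{DF}(\mathcal{X},\mathcal{L}_c^{\otimes m})$ with the $\GG_m$-weight of the central fibre $\lambda_{\mathrm{CM}}(X,L)|_{t=0}$; the continuity of the CM-line bundle on the ample cone (Theorem \ref{CM:line:bundle:epsilon:zero}); and the explicit Ross--Thomas computation of the deformation-to-the-normal-cone weight in the \emph{ample} case \cite[Section 4]{Ross:Thomas:2007}, transported to the normal variety $X'$ via Riemann--Roch in the form of \cite[Theorem A.1]{Boucksom-Hisamoto-Jonsson:2016}. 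First I would reduce to the ample case. Fixing an ample $A$ on $X$, for small rational $\epsilon>0$ the class $L_\epsilon := L\otimes A^{\otimes\epsilon}$ is ample by Kleiman's theorem, and the perturbed bundle $\mathcal{L}_{c,\epsilon}^{\otimes m} := \mathcal{L}_c^{\otimes m} + \epsilon\, p^*A^{\otimes m}$ makes $(\mathcal{X},\mathcal{L}_{c,\epsilon}^{\otimes m})$ an ample test configuration. Since the $\GG_m$-weight of the central fibre is linear in the CM class and $\lambda_{\mathrm{CM}}(X,L)=\lim_{\epsilon\to0}\lambda_{\mathrm{CM}}(X,L_\epsilon)$ by Theorem \ref{CM:line:bundle:epsilon:zero}, it suffices to establish the formula for ample $L$ and then pass to the limit $\epsilon\to0$, along which $\alpha_0,\alpha_1$ and hence $\mu,\mu_c$ vary continuously.

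In the ample case I would compute $\operatorname{DF}$ directly from the asymptotic expansion \eqref{DF:expansion:eqn}. The $\GG_m$-action arising from the deformation to the normal cone grades $\H^0(X,mkL)$ by order of vanishing along the exceptional divisor $\mathcal{E}$; identifying $\mathcal{F}\simeq X'$ and using $\mathcal{L}_c^{\otimes m}\big|_{\mathcal{F}}\simeq m(\pi^*L - cE)$, the weight-$j$ graded piece has dimension governed by $\H^0\big(X', k(\pi^*(mL)-jE)\big)$ for $0\leq j\leq cm$. Applying \cite[Theorem A.1]{Boucksom-Hisamoto-Jonsson:2016} on the normal variety $X'$, each such dimension has leading asymptotics controlled by the intersection numbers $((\pi^*L - tE)^n)$ and $(\K_{X'}\cdot(\pi^*L-tE)^{n-1})$ evaluated at $t=j/m$, that is by $\alpha_0(L;Z;t)$ of \eqref{alpha0} and $\alpha_1(L;Z;t)$ of \eqref{alpha1}.

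I would then assemble the total dimension $\dim\H^0(X,mkL)$ and the total weight $w_{mk}$ as sums over $j$ of the above quantities and convert these sums to integrals over $t\in[0,c]$. The Euler--Maclaurin correction between the Riemann sum and $\int_0^c\alpha_0(t)\,\mathrm{d}t$ produces exactly the half-derivative term $\alpha_0'(L;Z;t)/2$ in the numerator, so that the numerator collects $\int_0^c\big(\alpha_1(t)+\tfrac12\alpha_0'(t)\big)\,\mathrm{d}t$ and the denominator collects $\int_0^c\alpha_0(t)\,\mathrm{d}t$. Matching with \eqref{DF:expansion:eqn} and dividing through shows that $\operatorname{DF}(\mathcal{X},\mathcal{L}_c^{\otimes m})$ is a positive scalar multiple of $\mu(X;L)-\mu_c(L;\Ish_Z)$, the positive constant collecting the factorials in \eqref{alpha0}--\eqref{alpha1} together with the normalising factor $1/a_0^2$ in the definition of $\lambda_{\mathrm{CM}}$, and the sign being pinned down by the $\GG_m$- and $\operatorname{DF}$-conventions recorded above. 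This recovers \cite[Section 4]{Ross:Thomas:2007} when $L$ is ample.

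The main obstacle is the interaction of the Riemann--Roch asymptotics on the possibly singular normal variety $X'$ with the perturbation limit $\epsilon\to0$. One must verify that the subleading coefficient $\alpha_1$, which involves the canonical class $\K_{X'}$ of a normal variety, really is the second term of the \emph{equivariant} Euler characteristic---this is precisely what forces the use of \cite[Theorem A.1]{Boucksom-Hisamoto-Jonsson:2016} rather than smooth Hirzebruch--Riemann--Roch---and that extracting the central-fibre weight commutes with the limit in $\epsilon$, for which the continuity statement Theorem \ref{CM:line:bundle:epsilon:zero} is the essential input. A secondary subtlety is checking that the order-of-vanishing filtration along $\mathcal{E}$ matches the $\GG_m$-weight grading with the correct sign, so that the final answer is $\mu(X;L)-\mu_c(L;\Ish_Z)$ and not its negative.
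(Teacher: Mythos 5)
Your proposal follows essentially the same route as the paper: perturb $L$ to the ample class $L\otimes A^{\otimes\epsilon}$, use Theorem \ref{CM:line:bundle:epsilon:zero} together with Theorem \ref{Paul:Tian:DF:Thm} to identify $\operatorname{DF}(\mathcal{X},\mathcal{L}_c^{\otimes m})$ with the limit of the perturbed Donaldson--Futaki invariants, invoke the Ross--Thomas computation combined with Riemann--Roch for normal varieties in the ample case, and conclude by continuity of $\alpha_0$, $\alpha_1$, $\mu$ and $\mu_c$ in $\epsilon$. The only difference is that you additionally sketch the internals of the Ross--Thomas weight computation (the grading by order of vanishing along $\mathcal{E}$ and the Euler--Maclaurin correction giving the $\alpha_0'/2$ term), which the paper treats as a black box by citing \cite[Section 4]{Ross:Thomas:2007}.
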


\begin{proof}
The idea is to use Corollary \ref{CM:line:bundle:epsilon:zero} together with the Riemann-Roch Theorem for normal varieties (e.g., in the form of \cite[Theorem A.1]{Boucksom-Hisamoto-Jonsson:2016}) to reduce the proof of the theorem to the classical case of Ross and Thomas \cite[Section 4]{Ross:Thomas:2007}.  

Fix a sufficiently small rational number $\epsilon > 0$ and consider the ample line bundle $L + \epsilon H$.  Suppose that 
$$c \in (0,\epsilon(L;Z)] \bigcap \QQ \text{.}$$  
Then $\pi^*L - c E$ is nef.  Also, 
$$\pi^*L \leq \pi^*(L+ \epsilon H)$$ 
whence 
$$0 \leq \pi^*L - c E \leq \pi^*(L+\epsilon H) - c E\text{.}$$  
Thus $\pi^*(L+\epsilon H) - c E$ is nef too.    In other words
$$
c \in (0,\epsilon(L+\epsilon H;Z)] \bigcap \QQ \text{.}
$$

Let $\mathcal{X} := \operatorname{DefNC}(X,Z)$ denote the deformation to the normal cone.   Replacing $m$ by some suitably large divisible integer $m$ if necessary, consider the test configurations 
$$(\mathcal{X}, p^*L^{\otimes m} - c m \mathcal{E})$$ 
and
$$(\mathcal{X}, p^*(L^{\otimes  m}  \otimes H^{\otimes \epsilon m}) - cm \mathcal{E}) ) \text{.}$$

Combining Corollary \ref{CM:line:bundle:epsilon:zero} and Theorem \ref{Paul:Tian:DF:Thm} it follows that
$$
\operatorname{DF}( (\mathcal{X}, p^*L^{\otimes m} - c m \mathcal{E}) ) = \lim\limits_{\epsilon \to 0} \operatorname{DF}( (\mathcal{X}, p^*(L^{\otimes m}  \otimes H^{\otimes \epsilon m}) - cm \mathcal{E})  ) 
$$

On other other hand, the conclusion of Theorem \ref{Ross:Thomas:Slope:Stab:DF} applied to $(\mathcal{X}, p^*(L^{\otimes  m}  \otimes H^{\otimes \epsilon m}) - cm \mathcal{E})  )$ follows as a consequence of the theory of Ross and Thomas \cite[Section 4]{Ross:Thomas:2007} combined  with the Riemann-Roch Theorem for normal varieties (e.g., in the form of \cite[Theorem A.1]{Boucksom-Hisamoto-Jonsson:2016}).

Finally, since
$$
\mu(X;L) - \mu_c(L;\Ish_Z) = \lim\limits_{\epsilon \to 0} (\mu(X;L+\epsilon H) - \mu_c(L+\epsilon H;\Ish_Z) )
$$
the conclusion desired by Theorem \ref{Ross:Thomas:Slope:Stab:DF} then follows.
\end{proof}

\providecommand{\bysame}{\leavevmode\hbox to3em{\hrulefill}\thinspace}
\providecommand{\MR}{\relax\ifhmode\unskip\space\fi MR }
\providecommand{\MRhref}[2]{%
  \href{http://www.ams.org/mathscinet-getitem?mr=#1}{#2}
}
\providecommand{\href}[2]{#2}


\end{document}